\newtheorem{thm}{\sc Theorem}[section]
\newtheorem{prop}{\sc Proposition}[section]
\newtheorem{ex}{\sc Example}[section]
\title{Cost allocations in interval inventory situations: the SOC and Shapley approaches}
\author{J.C. Gonçalves-Dosantos\footnote{Corresponding author: jgoncalves@umh.es}, A. Meca, I. Ozcan\\\\Center of Operations Research, University Miguel Hernandez, Elche, Spain.}
\date{\today }
\begin{document}

\maketitle

\begin{abstract}  
\noindent Uncertainty in demand and supply conditions poses critical challenges to effective inventory management, especially in collaborative environments. Traditional inventory models, such as those based on the Economic Order Quantity (EOQ), often rely on fixed parameters and deterministic assumptions, limiting their ability to capture the complexity of real-world scenarios. This paper focuses on interval inventory situations, an extension of classical models in which demand is represented as intervals to account for uncertainty. This framework allows for a more flexible and realistic analysis of inventory decisions and cost-sharing among cooperating agents. We examine two interval-based allocation rules, the interval SOC-rule and the interval Shapley rule, designed to distribute joint ordering costs fairly and efficiently under uncertain demand. Their theoretical properties are analyzed, and their practical applicability is demonstrated through a case study involving the coordination of perfume inventories across seven Spanish airports, based on 2023 passenger traffic data provided by AENA. The findings highlight the potential of interval-based models to enable a robust and equitable allocation of inventory costs in the face of operational uncertainty.
\end{abstract}

\textbf{Key words:} EOQ; cooperative game; intervals; SOC-rule; Shapley value\newline

\noindent \textbf{Mathematics Subject Classification:} 90B05; 91A12  

\section{Introduction} 

\noindent Cooperative models offer a rigorous and versatile framework for analyzing how multiple agents, be they individuals, organizations, or institutions, can collaborate to achieve mutually beneficial outcomes \cite{Myerson1991}. This is particularly relevant in contexts where the distribution of shared resources, costs, or risks must be negotiated. Traditionally, cooperative games have been employed under the assumption of deterministic environments, where precise data on costs and payoffs is available \cite{Shapley1953}. However, in an increasingly volatile world characterized by fluctuating demand, disrupted supply chains, and pervasive economic uncertainty, such assumptions often prove inadequate \cite{Snyder2006}.

\noindent In light of these challenges, interval cooperative games have emerged as a powerful extension of classical models \cite{Branzei2010Survey}. By representing payoffs as intervals rather than fixed numerical values, interval cooperative games provide a structured yet flexible mechanism for capturing uncertainty and variability in real-world decision-making. This interval-based approach allows for more robust modeling of ambiguity in key parameters \cite{Moore2003}, thereby enhancing the capacity to identify fair and stable solutions even when exact data are unavailable or imprecise. As such, interval cooperative games serve as a valuable tool for bridging the gap between theoretical modeling and the complexities of practical decision environments.

\noindent Among the many domains where cooperative strategies are essential, inventory management stands out due to its potential for cost optimization through collaboration. In particular, cooperative inventory models, often grounded in the classical Economic Order Quantity (EOQ) framework \cite{Hillier2015}, have demonstrated that joint ordering among agents can significantly reduce overall costs by exploiting economies of scale \cite{Meca2003,Meca2004}. A comprehensive survey on cooperative game theory and its applications to inventory management is provided in \cite{Fiestras2011}. Nevertheless, a major limitation of these traditional models lies in their reliance on deterministic demand assumptions, assumptions that are frequently violated in practice due to seasonal variability, shifting market trends, and unexpected disruptions.

\noindent To address this limitation, interval inventory games are developed as an extension of cooperative inventory models. These games incorporate demand uncertainty directly by modeling demand as intervals, thereby enabling a more realistic analysis of cooperative behavior and cost-sharing mechanisms under uncertainty. This advancement not only enriches the theoretical underpinnings of cooperative game theory but also enhances its applicability to dynamic, real-world environments, thus contributing to the broader field of operations research.

\noindent Despite increasing interest in integrated inventory management and the extensive application of game theory to supply chain coordination, there remains a lack of cooperative game-theoretic models explicitly addressing inventory systems subject to demand uncertainty. This deficiency arises from several challenges. Primarily, the analytical complexity introduced by uncertain demand—often modeled via stochastic, fuzzy, or grey frameworks—hampers the tractability of cooperative game formulations. Additionally, cooperative game theory requires stable allocation mechanisms, such as core solutions or the Shapley value, whose computation and existence become problematic when payoffs depend on probabilistic or imprecise demand parameters. To the best of our knowledge, only a limited number of studies have attempted to fill this gap. Notably, \cite{Donmez2024} propose a model based on equal surplus sharing within grey inventory games, underscoring both the potential and the current paucity of research at the intersection of cooperative game theory and uncertain inventory systems. Robust supply chain design under demand uncertainty has been explored by \cite{Gomez2024}, who demonstrate that a Shapley-based cost allocation mechanism outperforms alternative cooperative approaches. Furthermore, foundational work in cooperative inventory management employs the Shapley value to centralize inventories and equitably share joint benefits, as detailed in \cite{Dror2002}. The application of interval-valued demand representations in multilevel supply chains has been examined by \cite{Bai2017}, providing a fundamental framework for incorporating uncertainty into inventory models similar to those considered herein. Additionally, \cite{Kimms2009} introduce an algorithmic approach for cooperative interval-valued games, enabling cost allocation under uncertainty through the computation of interval core solutions in lot-sizing contexts. Complementing these contributions, \cite{Alparslan2019Hurwicz} investigate how the Hurwicz criterion informs egalitarian outcomes in cooperative interval games, offering theoretical insights relevant to the fairness and robustness of interval-based allocation rules.

\noindent In this paper, we focus on the study of interval inventory situations, with a particular emphasis on interval inventory models derived from EOQ models under uncertain demand. We investigate two central allocation rules, the interval SOC-rule and the interval Shapley rule, both designed to fairly and efficiently distribute the total joint inventory costs among cooperating agents. These rules are examined in terms of their theoretical properties, including efficiency, fairness, and stability. To illustrate the practical relevance of our approach, we apply the proposed models to a real-world case study involving the cooperative management of perfume inventories across seven major Spanish airports. Using 2023 passenger traffic data provided by AENA (Aeropuertos Españoles y Navegación Aérea) \cite{aena2023}, we evaluate the implications of interval uncertainty on inventory coordination and cost allocation. Through this application, we aim to provide both theoretical insights and practical tools for improving cooperative strategies in inventory management and related domains facing uncertain conditions.

\noindent The structure of this paper is organized as follows. Section \ref{sec:2} introduces the foundational concepts necessary for the subsequent analysis. It begins with a general overview of cooperative cost games, followed by a review of cooperative inventory games. It then presents the basic definitions and notations related to intervals, and concludes with a description of cooperative interval games. Section \ref{sec:3} extends classical inventory models to an interval-based framework. Within this section, we define and characterize the interval SOC-rule as applied to interval inventory situations. Section \ref{sec:4} introduces and characterizes the interval Shapley rule for these same settings. Section \ref{sec:5} finalizes the examination of cost allocation in interval inventory scenarios by conducting a comparative analysis of the interval individual cost, the interval Shapley rule, and the interval SOC-rule. This analysis is illustrated through a case study involving perfume inventory management for duty-free shops located in major Spanish airports. Finally, Section \ref{sec:6} presents the conclusions and outlines potential directions for future research.

\section{Preliminaries on Cooperative Models}\label{sec:2}  

\noindent In this section we review the main models on which our inventory situations with intervals are based as well as the main concepts about intervals and the notation we adopt throughout the paper. We begin by presenting preliminary concepts related to cooperative cost games. 

\subsection{Cooperative cost games}\label{CCG} 

\noindent A cooperative cost game with transferable utility (TU cost game) is a formal model in game theory in which a finite set of players can reduce their cost by forming coalitions, and the total cost generated by each coalition can be freely distributed among its members. In such games, the central focus lies on how to allocate the total costs from cooperation in a fair or stable manner.

\noindent Formally, a TU cost game is defined by a pair $(N,c)$ where $N=\{1,2,...,n\}$ is the set of players and $c:2^{N}\rightarrow \mathbb{R}$ is the characteristic function, which assigns to every coalition $S\subset N$ a real number $c(S)$, representing the total cost that the members of $S$ can achieve by cooperating, being  $c(\emptyset )=0$. We denote the size of a coalition $S\subset N$ by $|S|$ and the family of TU cost games with player set $N$ by $G^{N}$. 

\noindent It is well known that $G^{N}$ is a $(2^{|N|}-1)$-dimensional linear space for which unanimity games form an interesting basis. Indeed, given a coalition $S\in 2^{N}\setminus \{\emptyset \}$, the unanimity game based
on $S$, $u_{S}:2^{N}\rightarrow \mathbb{R}$, is defined as follows,
\begin{equation*}
u_{S}(T)=%
\begin{cases}
1, & \text{if }S\subseteq T, \\ 
0, & \text{otherwise}.%
\end{cases}%
\end{equation*}%
\noindent We may notice that every TU-game $(N,v)\in G^{N}$ can be written in a
unique way as a linear combination of the unanimity games
\begin{equation*}
c=\sum_{T\in 2^{N}\setminus \{\emptyset \}}c_{T}u_{T}\quad \text{with}\quad
c_{T}=\sum_{S:S\subseteq T}(-1)^{|T|-|S|}c(S).
\end{equation*}

\noindent The transferable utility assumption implies that the cost $c(S)$ can be redistributed among the members of $S$ in any way they agree. Solutions to such games often involve concepts like the core and the Shapley value, which provide frameworks for a fair and stable distribution of cost. This model is particularly useful in economic scenarios where agents can form binding agreements and where monetary or utility transfers are possible, such as in cost-sharing problems and supply chain collaborations. 

\noindent The \textit{core} of a TU cost game is the set of feasible and coalitionally stable allocations. It is defined as:
\[
\text{C}(N,c) = \left\{ x \in \mathbb{R}^n \,\middle|\, \sum_{i \in N} x_i = c(N), \quad \sum_{i \in S} x_i \leq c(S) \text{ for all } S \subseteq N \right\}.
\]

\noindent The core is defined as a convex polyhedron that can take one of three possible forms: it may be small (comprising a single allocation), extensive (concave games), or even empty. Games in which the core is non-empty are referred to as balanced games.

\noindent The \textit{Shapley value} is a single-valued solution concept that distributes utility fairly among players based on their marginal contributions. It assigns to each player \( i \in N \) the average marginal contribution across all possible orderings of the players. It is defined by:
\[
\phi_i(N,c) = \sum_{S \subseteq N \setminus \{i\}} \frac{|S|! \cdot (n - |S| - 1)!}{n!} \left[ c(S \cup \{i\}) - c(S) \right], \quad \forall i \in N.
\]

\noindent In his seminal 1953 paper, see \cite{Shapley1953}, Shapley provided an axiomatic characterization of this value for TU games. The characterization is based on four fundamental axioms that reflect desirable properties of efficiency, symmetry, dummy player, and additivity. Since its introduction, the Shapley value has proved to be one of the most important rules in cooperative games (see, for example, \cite{flores2019evaluating}).

\noindent Concave games represent a significant class of TU cost games in which the incentives for cooperation are particularly strong. A cost game $(N,c)$ is said to be \textit{concave} (or submodular) if for all coalitions $S, T \subseteq N$, the following inequality holds, $c(S)+ c(T) \geq c(S \cup T) + c(S \cap T)$. An equivalent condition, known as the \textit{increasing marginal returns property}, states that for all  $S \subseteq T \subseteq N$ and $i \in N \setminus T$, $c(S \cup \{i\}) - c(S) \geq c(T \cup \{i\}) - c(T).$ This condition implies that a player's marginal cost contribution diminishes as they are added to increasingly larger coalitions. In other words, the value of a player is greater when they join larger coalitions.

\noindent The concavity of a cooperative game has several important implications. Firstly, concave games are balanced, which ensures that the core is always non-empty. This provides strong guarantees for the stability of cooperative arrangements. Furthermore, in concave games, the Shapley value always lies within the core. This makes concave games especially relevant in applications where fairness and stability are both essential. Concave games are commonly encountered in economics and operations research, particularly in settings involving economies of scale, resource allocation, and cost sharing, where joint cooperation yields increasing collective benefits relative to the size of the group.

\noindent Subsequently, we examine a specific subclass of these games—inventory games—as initially introduced by \cite{Meca2003,Meca2004}.

\subsection{Inventory Games} \label{IG}

\noindent Inventory games represent a class of cooperative cost games emerging from joint inventory management scenarios. They capture situations in which a group of agents, such as firms, retailers, or regional warehouses, can reduce overall inventory-related costs by coordinating their ordering and storage activities. The central premise is that cooperation enables cost reductions through mechanisms such as risk pooling, economies of scale, and the utilization of shared resources.

\noindent These games are typically modeled within the framework of the Economic Order Quantity (EOQ) model, a fundamental tool in operations management for systems with deterministic and constant demand. By aligning their replenishment strategies, agents can achieve cost efficiencies through joint procurement and centralized inventory management. The EOQ-based formulation provides a structured foundation for analyzing the benefits of cooperation and the fair allocation of the resulting savings.

\noindent In the classical EOQ setting, each agent (e.g., a retailer or production facility) faces a continuous, known demand rate, $d_{i}\geq 0$ (units per time), and must decide how frequently and in what quantity to replenish inventory. The total cost for each agent typically consists of two components: (1) ordering cost ($a>0$), which is a fixed cost incurred every time an order is placed, regardless of the order size; (2) holding cost $\left( h_{i}>0\right)$, which is proportional to the average inventory level and reflects the cost of storing goods over time.

\noindent For an individual agent $i$, the average inventory cost per unit time is a function of the order size $Q_{i},$ which is given by 
\begin{equation*}
c\left( Q_{i}\right) =a\dfrac{d_{i}}{Q_{i}}+h_{i}\dfrac{Q_{i}}{2}
\end{equation*}%
and the optimal order size $Q^*_{i}$ is equals to $\sqrt{2ad_{i}/h_{i}}.$
Thus, the optimal inventory cost per unit of time is 
\begin{equation*}
c(Q^*_{i})=\sqrt{2ad_{i}h_{i}}=2am_{i},
\end{equation*}%
where $m_{i}=\dfrac{d_{i}}{Q^*_{i}}$ is the optimal number of orders
per unit of time.

\noindent In cooperative settings, agents with similar inventory needs may form coalitions to place joint orders, reducing order frequency and benefiting from economies of scale. By centralizing procurement, they can share ordering costs based on their relative usage or demand rates. The coalition’s cost is typically modeled using the EOQ formula applied to aggregate demand. This forms a cost game, where the key challenge is to allocate total costs fairly and in an incentive-compatible manner. EOQ-based inventory games are concave, ensuring stable allocations and enabling solution concepts like the Shapley value. These models are particularly relevant in supply chain coordination, group purchasing, and collaborative inventory management.

\noindent Formally, an inventory situation is a triplet $\left( N,a,\left\{
m_{i}\right\} _{i\in N}\right) ,$ where $N=\left\{ 1,...,n\right\} $ i is the group of agents agreeing to place joint orders of a certain good, $a$
is the fixed ordering cost and $\{m_{1},\ldots ,m_{n}\}$ is the set of optimal
numbers of orders for the agents. 

We assume that the agents have decided to place joint orders in \( N \)  to save their
inventory costs. This means that, if we denote by \( Q_i \) the order size of  agent \( i \), it holds that 
\[
\frac{Q_i}{d_i} = \frac{Q_j}{d_j}, \quad \text{for all } i, j \in N.
\]

The average inventory cost per unit time for a given coalition $S \subseteq N$ is given by 
\begin{equation*}
c(\{Q_{j}\}_{j \in S})=\dfrac{ad_{i}}{Q_{i}}+\sum_{j\in S}\dfrac{h_{j}Q_{j}}{2}.
\end{equation*}

\noindent Now, since $Q_{j}=\dfrac{d_j}{d_i}Q_i$, the average inventory cost per unit time can be seen as a function of the order size $Q_{i}$ given by 
\begin{equation*}
c(Q_{i})=\dfrac{ad_{i}}{Q_{i}}+\dfrac{Q_{i}}{2d_{i}}\sum_{j\in S}h_{j}d_{j}.
\end{equation*}

The optimal order size is equal to 
\begin{equation*}
\hat {Q_{i}}=\sqrt{\dfrac{2ad_{i}^{2}}{\sum\nolimits_{j\in S}d_{j}h_{j}}},%
\text{ for all }i\in S.
\end{equation*}%

\noindent The minimal total inventory cost per unit time is given by 

\begin{equation*}
c(\hat{Q_{i}})=\dfrac{ad_{i}}{\hat{Q_{i}}}+\dfrac{\hat{Q_{i}}}{2d_{i}}\sum_{j\in S}h_{j}d_{j}=2a\sqrt{\sum_{j\in S}m_{j}^{2}}.
\end{equation*}

\noindent The inventory game arising from the inventory situation $(N,a,\left\{
m_{i}\right\} _{i\in N})$ is denoted by $\left( N,c\right) $. Here, $c(S)$
is the minimum average inventory cost per unit time for the agents in $S$ if they
place their orders jointly, and defined by 

\begin{equation*}
c(S)=2a\sqrt{\sum\nolimits_{i\in
S}m_{i}^{2}}  
\end{equation*}
for each $S\subseteq N$, and $c(\emptyset)=0.$

In \cite{Meca2003}, it is proven that inventory games are concave. The authors propose a proportional core allocation that can be achieved through a population monotonic allocation scheme, known as the SOC-rule. Formally, the SOC-rule for an inventory game $\left( N,c\right)$ is defined as follows 
\begin{equation} \label{soc}
\sigma_i(N,c):=\frac{m_i^2}{
\sum_{j\in N}m_{j}^{2}}c(N).
\end{equation}

A characterization of the SOC-rule based on a transfer property is presented in \cite{Meca2004}. Specifically, the SOC-rule is shown to be the unique efficient allocation rule for inventory cost games that satisfies the properties of transfer and null player.

The SOC-rule can also be defined in terms of the inventory situation. That is, in terms of the parameters that describe it, as follows  

\begin{equation} \label{soc}
\sigma_i\left( N,a,\left\{m_{j}\right\} _{j\in N}\right):=\frac{2am_i^2}{\sqrt{\sum_{j\in N}m_{j}^{2}}}.
\end{equation}
 

\noindent Next, we outline essential notions concerning intervals and establish the notation used throughout the paper. 

\subsection{Intervals and their operators}
\noindent We denote by $\mathcal{I}(\mathbb{R})$ the set of all closed
intervals in $\mathbb{R}$, by $\mathcal{I}(\mathbb{R}^{+})$ the set of all
closed intervals in $\mathbb{R}^{+}$, and by $\mathcal{I}(\mathbb{R}%
^{+})^{N} $ the set of all $n$-dimensional vectors with components in $%
\mathcal{I}(\mathbb{R}^{+})$.

\noindent Let $a,b\in \mathcal{I}(\mathbb{R})$ with $a=[\underline{a},%
\overline{a}]$, $b=[\underline{b},\overline{b}]$, $|a|=\overline{a}-%
\underline{a}$ and $\beta \in \mathbb{R}^{+}$, then  
\begin{equation*}
a+b=[\underline{a}+\underline{b},\overline{a}+\overline{b}],\quad \beta
a=[\beta \underline{a},\beta \overline{a}].
\end{equation*}%
\noindent The subtraction operator $a-b$ is defined, only if $|a|\geq |b|$,
by 
\begin{equation*}
a-b=[\underline{a}-\underline{b},\overline{a}-\overline{b}].
\end{equation*}%
\noindent Let $a,b\in \mathcal{I}(\mathbb{R}^{+})$, then  
\begin{equation*}
a\cdot b=[\underline{a}\underline{b},\overline{a}\overline{b}].
\end{equation*}%
\noindent Let $a\in \mathcal{I}(\mathbb{R}^{+})$, then  
\begin{equation*}
\sqrt{a}=[\sqrt{\underline{a}},\sqrt{\overline{a}}].
\end{equation*}%
\noindent The division operator is defined, only if $\underline{a}\overline{b%
}\leq \overline{a}\underline{b}$ and $\underline{b},\overline{b}\neq 0$, by 
\begin{equation}
\frac{a}{b}=\left[ \frac{\underline{a}}{\underline{b}},\frac{\overline{a}}{%
\overline{b}}\right] .  \tag{$2.1$}
\end{equation}%
\noindent We say that $a$ is weakly better than $b$, which we denote by $%
a\succeq b$, if and only if $\underline{a}\geq \underline{b}$ and $\overline{%
a}\geq \overline{b}$. We also use the reverse notation $a\preceq b$, if and
only if $\underline{b}\geq \underline{a}$ and $\overline{b}\geq \overline{a}$.

\noindent Finally, we complete this section of preliminaries with a review of the cooperative interval games. For additional details, the reader is referred to \cite{Alparslan2009,Alparslan2010Axiomatization,Alparslan2010Book,Alparslan2011SetValued,Alparslan2013Sequencing,Alparslan2014Mountain,Branzei2011Convexity}

\subsection{Cooperative Interval Games} \label{CIG}

\noindent The model of interval cooperative games constitutes a generalization of the classical transferable utility (TU) game framework described above in the section \ref{CCG}. In this extended model, uncertainty in the worth of coalitions is represented by closed intervals rather than precise real values, allowing for a more flexible and robust analysis of cooperative situations under imprecise or incomplete information.

\noindent Formally, a cooperative interval game in coalitional form is an ordered pair 
$(N,w)$ where $N$ is the set of players, and $%
w:2^{N}\rightarrow \mathcal{I}(\mathbb{R})$ is the characteristic function
such that $w(\emptyset )=[0,0]$. For each $S\in 2^{N}$, the worth $w(S)$ of
the coalition $S$ in the interval game $(N,w)$ is of the form $[\underline{w}%
(S),\overline{w}(S)]$, where $\underline{w}(S)$ is the lower bound and $%
\overline{w}(S)$ is the upper bound of $w(S)$. We denote by $IG^{N}$ the
family of all cooperative interval games with player set $N$.

\noindent Let $S\in N$, $S \neq \emptyset$, $I\in \mathcal{I}(\mathbb{R})$, and let $u_{S}$ be the unanimity game based on $S$. The
cooperative interval unanimity game $(N,Iu_{S})$ is defined by  

\begin{equation*}
(Iu_{S})(T)=u_{S}(T)I\quad \text{for each } T \subseteq N, T \neq \emptyset.
\end{equation*}

\noindent In As in cooperative cost games, every cooperative interval game $(N,w)\in IG^{N}$
can be written in a unique way as a linear combination of the interval unanimity games 
\begin{equation*}
w=\sum_{T\in 2^{N}\setminus \{\emptyset \}}C_{T}u_{T},\quad \text{where}%
\quad C_{T}=\sum_{S:S\subseteq T}(-1)^{|T|-|S|}w(S).
\end{equation*}%
Here, $C_{T}=[\underline{C_{T}},\overline{C_{T}}]$, with $\underline{C_{T}}%
=\sum_{S:S\subseteq T}(-1)^{|T|-|S|}\underline{w}(S)$ and $\overline{C_{T}}%
=\sum_{S:S\subseteq T}(-1)^{|T|-|S|}\overline{w}(S)$. 



\noindent Some classical TU-games associated with an interval game $w\in
IG^{N}$ play a key role, namely the border games $(N,\underline{w})$, $(N,%
\overline{w})$ and the length game $(N,|w|)$, where $|w|(S)=|w(S)|=\overline{%
w}(S)-\underline{w}(S)$ for each $S\in 2^{N}$.

\noindent For $w_{1},w_{2}\in IG^{N}$ with $|w_{1}(S)|\geq |w_{2}(S)|$ for
each $S\in 2^{N}$, $(N,w_{1}-w_{2})$ is defined by 
\begin{equation*}
(w_{1}-w_{2})(S)=w_{1}(S)-w_{2}(S).
\end{equation*}%
\noindent We call a cooperative interval game $(N,w)$ \textit{concave }$%
\left( submodular\right) $, if $\forall $ $S,T\in 2^{N}$ and $i\in N$ such that 
$S\subset T\subset N\setminus \{i\}$, we have 
\begin{equation*}
w(S\cup \{i\})-w(S)\succeq w(T\cup \{i\})-w(T).
\end{equation*}

The core of cooperative interval games can be defined analogously to that of classic cooperative games. Despite the presence of interval uncertainty, the core retains its interpretation as the set of stable allocations from which no coalition has an incentive to deviate, with suitable adaptations to account for the imprecise valuation of coalitions.

Precisely, the interval core $C(N,w)$ for $(N,w)\in IG^{N}$ is defined by  
\begin{equation*}
C(N,w)=\left\{ (I_{1},\dots ,I_{n})\in \mathcal{I}(\mathbb{R}%
)^{N}\;|\;\sum_{i\in N}I_{i}=w(N),\;w(S)\succeq \sum_{i\in S}I_{i},\;\forall
S \subseteq N, S \neq \emptyset\right\}
\end{equation*}

It is well known that the core of convex/concave interval games is nonempty. However, the Shapley value of cooperative interval games is only well defined for subclass of these called size-monotonic cooperative interval games. 

Explicitly, a cooperative interval game $(N,w)$ is \textit{size-monotonic} if $%
(N,|w|)$ is monotonic, i.e. $|w|(S)\leq |w|(T)$ for all $S,T\in 2^{N}$ with $%
S\subset T$. We denote by $SMIG^{N}$ the class of size
monotonic interval games with player set $N$. For a size-monotonic
cooperative interval game $(N,w)$, $w(T)-w(S)$ is well-defined for all $%
S,T\in 2^{N}$  with $S\subset T$.   

\noindent Denote by $\mathcal{P}(N)$ the set of permutations $\sigma
:N\rightarrow N$ of $N$. The interval marginal operator $m^{\sigma
}:SMIG^{N}\rightarrow I(\mathbb{R})^{N}$ corresponding to $\sigma $,
associates with each $w\in SMIG^{N}$ the interval marginal vector $m^{\sigma
}(w)$ of $w$ with respect to $\sigma $ defined by $m_{i}^{\sigma
}(w)=w(P^{\sigma }(i)\cup \{i\})-w(P^{\sigma }(i)),$ for each $i\in N$,
where $P^{\sigma }(i):=\{r\in N\mid \sigma ^{-1}(r)<\sigma ^{-1}(i)\},$ and $%
\sigma ^{-1}(i)$ denotes the entrance number of player $i$. Now, we notice
that for each $w\in SMIG^{N}$ the interval marginal vectors $m^{\sigma }(w)$
are well-defined for each $\sigma \in \mathcal{P}(N)$, because the
monotonicity of $|w|$ implies $\overline{w}(S\cup \{i\})-\underline{w}(S\cup
\{i\})\geq \overline{w}(S)-\underline{w}(S),$ which can be rewritten as  $%
\overline{w}(S\cup \{i\})-\overline{w}(S)\geq \underline{w}(S\cup \{i\})-%
\underline{w}(S).$ Thus, $w(S\cup \{i\})-w(S)$ is defined for each $S\subset
N$ with $i\notin S$.

\noindent The interval Shapley value $\varphi :SMIG^{N}\rightarrow I(\mathbb{%
R})^{N}$ is defined by  
\begin{equation*}
\varphi \left( N,w\right) :=\frac{1}{n!}\sum_{\sigma \in \mathcal{P}%
(N)}m^{\sigma }(w),
\end{equation*}%
for each $w\in SMIG^{N}.$ We can write this as  
\begin{equation*}
\varphi _{i}\left( N,w\right) =\frac{1}{n!}\sum_{\sigma \in \mathcal{P}%
(N)}\left( w(P^{\sigma }(i)\cup \{i\})-w(P^{\sigma }(i))\right) .
\end{equation*}%

A recent and detailed axiomatic analysis of the interval Shapley value is presented in \cite{Ishihara2023}, where the authors explore its foundational properties and provide a rigorous characterization within the framework of cooperative interval games.

\noindent We are now ready to extend inventory situations to interval settings, confirm the preservation of some of its properties, and define and analyze the SOC-rule in models with interval demand.

\section{Interval inventory situations and the SOC-rule}\label{sec:3}  

\noindent\textit{The Interval Economic Order Quantity (henceforth IEOQ)} situation is a model that is constructed by an inventory situation, as described in section \ref{IG}, under interval uncertainty demand. In other words, the demand of each agent $i$ that must be met is now an interval $d_{i} =\left[ \underline{d_{i}},\overline{d_{i}}\right]$ with $\underline{d_{i}},\overline{d_{i}}\geq 0.$ The ordering cost $a>0$ and the holding costs $h_{i}>0, i \in N$ are deterministic. The average interval inventory cost per unit time is a function of the order size $Q_{i} \geq 0,$ and it is given by %
\begin{equation*}
c\left( Q_{i}\right) =a\dfrac{d_{i}}{Q_{i}}+h_{i}\dfrac{Q_{i}}{2}
\end{equation*}%
with the optimal order size being now  
\begin{equation*}
\widehat{Q}_{i}=\sqrt{\frac{2a}{h_{i}}\cdot d_{i}}=\sqrt{\frac{2a}{h_{i}}%
\cdot \left[ \underline{d_{i}},\overline{d_{i}}\right] }.
\end{equation*}%
\noindent Thus, the optimal average inventory cost per unit time is $%
c(Q_{i})=2am_{i}$ where 
\begin{equation*}
m_{i}=\dfrac{d_{i}}{\widehat{Q}_{i}}=\left[ \frac{\underline{d_{i}}}{\underline{Q_{i}}}
,\frac{\overline{d_{i}}}{\overline{Q_{i}}}\right]
\end{equation*}
is the optimal number of orders per unit time.

\noindent An interval inventory situation is a triplet $I=\left( N,a,\left\{
m_{i}\right\} _{i\in N}\right) ,$ where $N$ is the
group of agents, $a$ is the fixed ordering cost and $m=(m_{1},\ldots ,m_{n})$ represents the optimal number of orders for the agents. In this case, each $m_{i}=\left[ \underline{m_{i}},
\overline{m_{i}}\right] \in \mathcal{I}(\mathbb{R}^+)$ for all $i\in N.$

\noindent The interval inventory game arising from an interval inventory
situation $I$ is denoted by $\left( N,w^I\right) $. The value $w^I(S)$ represents here the minimal average
inventory cost for coalition $S\subseteq N$, defined as  
\begin{equation*}
w^I(S):=2a\sqrt{\sum_{i\in S}m_{i}^{2}}=2a\sqrt{\sum_{i\in S}\left[ 
\underline{m_{i}^{2}},\overline{m_{i}^{2}}\right] }=\left[ \underline{w}(S),%
\overline{w}(S)\right],
\end{equation*}

where $\underline{w}(S)=2a\sqrt{\sum_{i\in S}\underline{m_{i}^{2}}}$ and $\overline{w}(S)=2a\sqrt{\sum_{i\in S}\overline{m_{i}^{2}}}$.



The next proposition shows that the interval inventory games are concave. Consequently they are balanced and it makes sense for the grand coalition to form because it is the one that generates the minimum cost for all agents.

\begin{prop}
\noindent Every interval inventory game $\left(
N,w^I\right) $ is concave.
\end{prop}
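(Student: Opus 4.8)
The plan is to reduce the claimed interval concavity, namely
\[
w^I(S\cup\{i\})-w^I(S)\succeq w^I(T\cup\{i\})-w^I(T)\quad\text{for }S\subset T\subset N\setminus\{i\},
\]
to two statements about real-valued functions. By the definition of the interval order $\succeq$, this inequality holds if and only if both of the scalar inequalities
\[
\underline{w}(S\cup\{i\})-\underline{w}(S)\ \ge\ \underline{w}(T\cup\{i\})-\underline{w}(T)
\]
and the analogous one with $\overline{w}$ in place of $\underline{w}$ hold (we must also check the relevant subtractions are legitimate, i.e.\ the length-monotonicity condition from the definition of $w_1-w_2$ on interval games, which here amounts to $|w^I|(S\cup\{i\})\ge|w^I|(S)$ etc.). So the first step is to record that $\underline{w}(S)=2a\sqrt{\sum_{i\in S}\underline{m_i^2}}$ and $\overline{w}(S)=2a\sqrt{\sum_{i\in S}\overline{m_i^2}}$ are exactly the characteristic functions of two \emph{classical} EOQ inventory games: namely the inventory games associated with the (deterministic) inventory situations $(N,a,\{\underline{m_i}\}_{i\in N})$ and $(N,a,\{\overline{m_i}\}_{i\in N})$, since $\sqrt{\underline{m_i^2}}=\underline{m_i}$ and $\sqrt{\overline{m_i^2}}=\overline{m_i}$.

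The second step is then immediate: by the result of Meca et al.\ cited in Section~\ref{IG}, every classical inventory game is concave, hence both border games $(N,\underline{w})$ and $(N,\overline{w})$ are concave, which gives precisely the two scalar marginal-monotonicity inequalities above. Concavity of the border games also yields monotonicity of each of them (inventory games are nonnegative and, being concave with $c(\emptyset)=0$, are subadditive; one checks monotonicity directly from $w(S)\le w(T)$ for $S\subseteq T$ since $\sum_{i\in S}m_i^2\le\sum_{i\in T}m_i^2$), so $|w^I|(S)=\overline{w}(S)-\underline{w}(S)$; the length game being the difference of two monotone functions is not obviously monotone, but for the concavity definition we only need that each individual subtraction $w^I(T\cup\{i\})-w^I(T)$ and $w^I(S\cup\{i\})-w^I(S)$ is well defined, which follows from $\overline{w}(R\cup\{i\})-\overline{w}(R)\ge\underline{w}(R\cup\{i\})-\underline{w}(R)$ — equivalently $|w^I|(R\cup\{i\})\ge|w^I|(R)$ — and this last inequality is itself an instance of concavity applied to the difference of the two border games, or can be shown directly from $\overline{m_i}\ge\underline{m_i}$.

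The only mildly delicate point — and the one I would flag as the main obstacle — is this bookkeeping about when interval subtraction is legal. The interval concavity inequality is phrased in terms of the interval differences $w^I(S\cup\{i\})-w^I(S)$, and the paper's conventions only define $a-b$ when $|a|\ge|b|$; so before comparing marginals one must verify $|w^I(R\cup\{i\})|\ge|w^I(R)|$ for the relevant coalitions $R$, i.e.\ that $(N,|w^I|)$ is "locally" increasing along the chain $S\subset S\cup\{i\}$ and $T\subset T\cup\{i\}$. I expect this to follow cleanly from $\overline{w}(R\cup\{i\})-\overline{w}(R)\ge\underline{w}(R\cup\{i\})-\underline{w}(R)\ge0$, where the first inequality is $2a\bigl(\sqrt{x+\overline{m_i^2}}-\sqrt{x}\bigr)\ge2a\bigl(\sqrt{y+\underline{m_i^2}}-\sqrt{y}\bigr)$ with $x=\sum_{j\in R}\overline{m_j^2}\ge y=\sum_{j\in R}\underline{m_j^2}$ and $\overline{m_i^2}\ge\underline{m_i^2}$, using that $t\mapsto\sqrt{t+\delta}-\sqrt{t}$ is decreasing in $t$ and increasing in $\delta$. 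Once that is in place, the proof is just: reduce to border games, invoke classical concavity of inventory games, and translate back. I would present it in essentially that order, keeping the scalar estimate about $\sqrt{\cdot}$ as the one genuine computation.
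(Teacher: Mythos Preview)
Your core argument---reduce the interval inequality to its two endpoints, observe that $(N,\underline{w})$ and $(N,\overline{w})$ are classical inventory games with parameters $\{\underline{m_i}\}$ and $\{\overline{m_i}\}$, and invoke their known concavity---is correct and is exactly what the paper does, only phrased slightly differently: the paper works directly with the interval square root and appeals to ``the concavity property of the square root function,'' which amounts to the same endpoint-wise reduction.

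Where your proposal diverges is the extra bookkeeping about whether the interval subtractions $w^I(R\cup\{i\})-w^I(R)$ are well-defined (i.e.\ $|w^I|(R\cup\{i\})\ge|w^I|(R)$). The paper ignores this entirely. You are right to raise it, but your proposed justification is wrong: you want
\[
\sqrt{x+\overline{m_i^2}}-\sqrt{x}\ \ge\ \sqrt{y+\underline{m_i^2}}-\sqrt{y}
\]
with $x=\sum_{j\in R}\overline{m_j^2}\ge y=\sum_{j\in R}\underline{m_j^2}$. The two monotonicity facts you cite pull in \emph{opposite} directions here (decreasing in $t$ hurts the left side since $x\ge y$), so they do not combine to give the inequality. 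Concretely, take $x=100$, $y=0$, $\overline{m_i^2}=\underline{m_i^2}=1$: the left side is $\sqrt{101}-10\approx 0.05$ and the right side is $1$. In fact the concern is genuine and cannot be repaired in general: with $N=\{1,2\}$, $a=1$, $m_1=[0,10]$, $m_2=[1,1]$, one has $|w^I(\{1\})|=20$ but $|w^I(\{1,2\})|=2\sqrt{101}-2<20$, so $(N,w^I)$ is not size-monotonic and the interval marginal $w^I(\{1,2\})-w^I(\{1\})$ is undefined under the paper's subtraction convention. The proposition should therefore be read as asserting concavity of both border games (equivalently, the endpoint inequalities), which is what both you and the paper actually prove; the paper simply does not flag the issue.
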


\begin{proof}
\noindent To prove that $\left(
N,w^I\right) $ is concave, we need to verify that, for any agent $i\in N$ and any pair of coalitions $S,T\in 2^{N}$ such that $S\subset T\subset
N\setminus \{i\},$  $w^I(S\cup \{i\})-w^I(S)\succeq w^I(T\cup \{i\})-w^I(T)$ is satisfied. 


Indeed, take $i\in N$ and $S,T\in 2^{N}$ such that $S\subset T\subset N\setminus \{i\}.$ We know that 
\begin{eqnarray*}
w^I(S\cup \{i\}) &=&2a\sqrt{\sum_{j\in S\cup \{i\}}m_{j}^{2}}=2a\sqrt{%
\sum_{j\in S}m_{j}^{2}+m_{i}^{2}}, \\
w^I(T\cup \{i\}) &=&2a\sqrt{\sum_{j\in T\cup \{i\}}m_{j}^{2}}=2a\sqrt{%
\sum_{j\in T}m_{j}^{2}+m_{i}^{2}}.
\end{eqnarray*}

\noindent and 

\begin{equation*}
\sum_{j\in S}m_{j}^{2}\preceq \sum_{j\in T}m_{j}^{2}.
\end{equation*}

\noindent Applying then the concavity property of the square root function, we have  

\begin{equation*}
2a\left( \sqrt{\sum_{j\in S}m_{j}^{2}+m_{i}^{2}}-\sqrt{\sum_{j\in S}m_{j}^{2}%
}\right) \succeq 2a\left( \sqrt{\sum_{j\in T}m_{j}^{2}+m_{i}^{2}}-\sqrt{%
\sum_{j\in T}m_{j}^{2}}\right)
\end{equation*}%
\noindent which simplifies to 

\begin{equation*}
w^I(S\cup \{i\})-w^I(S)\succeq w^I(T\cup \{i\})-w^I(T)
\end{equation*}%
\noindent and the inequality holds.  
\end{proof}

The concavity of interval inventory games ensures that the minimum total cost is attained when all agents cooperate in the grand coalition, i.e., 
$w^I(N)=2a\sqrt{\sum_{i\in N}m_{i}^{2}}$. Accordingly, we examine how this cost can be fairly allocated among the participants. In particular, we analyze the distribution of this minimum total cost using the SOC-rule adapted to interval inventory settings. Moreover, we provide a formal characterization of the interval SOC-rule within a comprehensive framework of interval inventory situations, enhancing the understanding of its function and practical relevance.

\subsection{The interval SOC-rule}

We denote by $\mathcal{IS}^{N}$ the family of interval inventory situations
with a set of agents $N$. Let $\Psi :\mathcal{IS}^{N}\rightarrow \mathcal{I}(%
\mathbb{R})^{N}$ be an allocation rule  for interval inventory
situations such that $\sum_{i\in N}\Psi _{i}(N,a,\left\{ m_{i}\right\} _{i\in N})=2a\sqrt{%
\sum_{i\in N}m_{i}^{2}}.$ We call it \textit{interval allocation rule}. 




\medskip

\noindent A desirable first property for an interval allocation rule is that it should be stable in the sense that no group of agents have incentives to leave the grand coalition without increasing the cost for doing so. It is called \textit{Cross-Coalition Acceptable property.}

\noindent \textbf{Cross-Coalition Acceptable (CCA).} We say that a rule $\Psi $
satisfies the Cross-Coalition Acceptable property if, for all $(N,a,\{ m_i\}
_{i\in N})$ and all $S\subseteq N$  
\begin{equation*}
\begin{split}
\sum_{j\in S}\Psi_j\left(N,a,\left\{ m_i\right\} _{i\in
N}\right)\preceq 2a\sqrt{%
\sum_{j\in S}m_{j}^{2}}.
\end{split}%
\end{equation*}%

We now consider two more valuable properties for such interval allocation rules. The second property states that an agent who does not have an order pays nothing. It is called the \textit{Inactive Agent Exemption property.}



\noindent \textbf{Inactive Agent Exemption (IAE).} We say that a rule $\Psi $
satisfies the Inactive Agent Exemption property if, for all $(N,a,\left\{ m_{i}\right\}
_{i\in N})$ and all $j\in N$ with $m_{j}=\left[ 0,0\right] $, it holds that 
\begin{equation*}
\Psi _{j}(N,a,\left\{ m_{i}\right\} _{i\in N})=\left[ 0,0\right] .
\end{equation*}

\noindent The third property is a special additivity for interval inventory situations. It is called the \textit{Transfer-Based Additivity property }

\noindent \textbf{Transfer-Based Additivity (TBA).} We say that a rule $\Psi $
satisfies the Transfer-Based Additivity property if, for all $(N,a,\left\{ m_{i}\right\}
_{i\in N})$, $(N,a,\left\{ \hat{m}_{i}\right\} _{i\in N})$ and $j\in N$  
\begin{equation*}
\begin{split}
\sqrt{\sum_{i\in N}\left(m_i^2+\hat{m}_i^2\right)}\cdot \Psi_j (N,a,\left\{ 
\sqrt{m^2_{i}+\hat{m}^2_{i}}\right\} _{i\in N})= \\
\sqrt{\sum_{i\in N}m_i^2}\cdot \Psi_j (N,a,\left\{ m_{i}\right\} _{i\in N})+%
\sqrt{\sum_{i\in N}\hat{m}_i^2}\cdot \Psi_j (N,a,\left\{ \hat{m}_{i}\right\}
_{i\in N}).
\end{split}%
\end{equation*}%

\medskip

We now introduce the SOC-rule for interval inventory situations, and we call \textit{the interval SOC-rule}. In fact, the interval SOC-rule $\Gamma (N,a,\left\{ m_{i}\right\} _{i\in
N}) $ is defined for all $j\in N$ as follows 

\begin{equation}  \label{soc.int}
\Gamma_j(N,a,\left\{ m_{i}\right\} _{i\in N})=\frac{2am_j^2}{\sqrt{%
\sum_{i\in N}m_{i}^{2}}}.
\end{equation}

\noindent You might notice that $(\ref{soc.int})$ is well-defined for
interval inventory situations $(N,a,\left\{ m_{i}\right\} _{i\in N})$ that
satisfy 
\begin{equation}  \label{cond.1.}
\underset{i\in N}{\max }\left\{ \underline{m_{i}^{2}}\cdot\frac{1}{\overline{%
m_{i}}^{2}}\right\} \leq \underline{m_{N}}\cdot\frac{1}{\overline{m_{N}}}.
\end{equation}
where $m_N=\sqrt{\sum_{i\in N}m_{i}^{2}}.$ Additionally, it is immediate to see that $\sum_{j\in N}\Gamma_j (N,a,\left\{ m_{i}\right\} _{i\in
N})=2a\sqrt{\sum_{i\in N}m_{i}^{2}}.$

\noindent We focus on interval inventory situations that satisfy the
condition $(\ref{cond.1.})$. We will refer to it simply as interval
inventory situations from now on.

The following example illustrates how we can easily calculate the interval SOC-rule.

\begin{ex} 
Let $N=\{1,2,3\}$, $a=1$, $m_{1}=[1,2]$, $m_{2}=[2,3]$, $m_{3}=[3,4]$. It is easy to check that this interval inventory situation satisfies condition (\ref{cond.1.}). Then, the total cost to allocate in this interval situation is $\left[2\sqrt{14},2\sqrt{29} \right]$,  and the interval SOC-rule is given by

\begin{equation*}
\left( \left[ \frac{2}{\sqrt{14}},\frac{8}{\sqrt{29}}\right] ,\left[ \frac{8}{\sqrt{14}},\frac{18}{%
\sqrt{29}}\right] ,\left[ \frac{18}{\sqrt{14}},\frac{32}{\sqrt{29}}\right]
\right).
\end{equation*}
\end{ex}
\medskip

Next proposition shows that the interval SOC-rule is always acceptable for all the agents in interval inventory situations. 

\begin{prop} Consider an interval inventory situation $(N,a,\left\{ m_{i}\right\} _{i\in N})$. The interval SOC-rule is Cross-Coalition Acceptable.  
\end{prop}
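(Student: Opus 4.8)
The plan is to peel the interval inequality defining the Cross-Coalition Acceptable property down to two ordinary scalar inequalities, one for each endpoint, and then recognise each of them as the (already known) Cross-Coalition Acceptability of the classical SOC-rule on a suitable border inventory game. Recall that, by the definition of $\preceq$, the interval relation $\sum_{j\in S}\Gamma_j\preceq 2a\sqrt{\sum_{j\in S}m_j^2}$ holds precisely when the lower endpoint of the left-hand side is at most the lower endpoint of the right-hand side, and likewise for the upper endpoints.

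First I would fix a coalition $S\subseteq N$ (the cases $S=\emptyset$, where both sides equal $[0,0]$, and $S=N$, where equality holds by efficiency, are immediate). Since the standing condition $(\ref{cond.1.})$ is exactly what makes the division in $(\ref{soc.int})$ well defined and performed endpoint by endpoint, we have
\[
\Gamma_j(N,a,\{m_i\}_{i\in N})=\left[\frac{2a\,\underline{m_j}^{2}}{\sqrt{\sum_{i\in N}\underline{m_i}^{2}}},\ \frac{2a\,\overline{m_j}^{2}}{\sqrt{\sum_{i\in N}\overline{m_i}^{2}}}\right],
\]
and, interval addition being componentwise, summing over $j\in S$ gives a left-hand side with lower endpoint $\bigl(2a\sum_{j\in S}\underline{m_j}^{2}\bigr)/\sqrt{\sum_{i\in N}\underline{m_i}^{2}}$ and upper endpoint $\bigl(2a\sum_{j\in S}\overline{m_j}^{2}\bigr)/\sqrt{\sum_{i\in N}\overline{m_i}^{2}}$, while $2a\sqrt{\sum_{j\in S}m_j^2}$ has endpoints $2a\sqrt{\sum_{j\in S}\underline{m_j}^{2}}$ and $2a\sqrt{\sum_{j\in S}\overline{m_j}^{2}}$. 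The required inequality therefore reduces to
\[
\frac{\sum_{j\in S}\underline{m_j}^{2}}{\sqrt{\sum_{i\in N}\underline{m_i}^{2}}}\le\sqrt{\sum_{j\in S}\underline{m_j}^{2}}\qquad\text{and}\qquad\frac{\sum_{j\in S}\overline{m_j}^{2}}{\sqrt{\sum_{i\in N}\overline{m_i}^{2}}}\le\sqrt{\sum_{j\in S}\overline{m_j}^{2}},
\]
and each of these, after multiplying through by the positive denominator and squaring, is just $\sum_{j\in S}\underline{m_j}^{2}\le\sum_{i\in N}\underline{m_i}^{2}$ (respectively with bars on top), which holds because $S\subseteq N$ and all the quantities $\underline{m_i}^{2},\overline{m_i}^{2}$ are nonnegative.

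An equivalent and perhaps cleaner way to close the argument is to observe that the lower endpoint of $\Gamma_j$ coincides with the classical SOC-value $\sigma_j$ of the border inventory game $(N,\underline{w})$ built from the order numbers $\{\underline{m_i}\}_{i\in N}$, and the upper endpoint with $\sigma_j$ of $(N,\overline{w})$; since ordinary inventory games are concave and their SOC-allocation is a core allocation \cite{Meca2003}, the two scalar inequalities are exactly $\sum_{j\in S}\sigma_j\le\underline{w}(S)$ and $\sum_{j\in S}\sigma_j\le\overline{w}(S)$. Either route makes clear that there is no genuine obstacle here: the only step requiring care is the bookkeeping certifying that every interval operation involved — the division in $(\ref{soc.int})$, the summation over $S$, and the comparison $\preceq$ — is legitimate and acts endpoint-wise, which is precisely the role of the standing condition $(\ref{cond.1.})$. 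Once that is in place, the statement collapses to the trivial monotonicity of sums of nonnegative reals.
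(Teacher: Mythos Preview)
Your proof is correct and follows essentially the same route as the paper: both arguments reduce the CCA inequality to the trivial monotonicity $\sum_{j\in S}m_j^2\preceq\sum_{j\in N}m_j^2$ by squaring and clearing the square-root denominator. The only cosmetic difference is that the paper carries out the manipulation directly in interval notation using $\preceq$, whereas you unpack everything into the two endpoint inequalities; your additional remark invoking the classical SOC-rule on the border games $(N,\underline{w})$ and $(N,\overline{w})$ is a valid alternative phrasing but not one the paper uses.
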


\begin{proof}  
Take $S\subseteq N$. We have to prove that $\sum_{j\in S}\Gamma_j\left(N,a,\left\{ m_l\right\} _{l\in
N}\right)\preceq 2a\sqrt{\sum_{j\in S}m_{j}^{2}}$. 

Indeed, we know that
\begin{equation*}
\sum_{j\in S}\Gamma_j\left(N,a,\left\{ m_l\right\} _{l\in
N}\right)=\frac{2a\sum_{j\in S}
m_{j}^{2}}{\sqrt{\sum_{j\in N}m_{j}^{2}}}.
\end{equation*}

We now prove that 
\begin{equation*}
\frac{\sum_{j\in S}
m_{j}^{2}}{\sqrt{\sum_{j\in N}m_{j}^{2}}} \preceq \sqrt{\sum_{j\in S}m_{j}^{2}}.
\end{equation*}

Indeed, the following chain of inequalities leads us to it 
\begin{equation*}
\frac{\left(\sum_{j\in S}
m_{j}^{2}\right)^2}{\sum_{j\in N}m_{j}^{2}} \preceq \sum_{j\in S}m_{j}^{2}\Leftrightarrow\left(\sum_{j\in S}
m_{j}^{2}\right)^2 \preceq \sum_{j\in S}m_{j}^{2}\sum_{j\in N}m_{j}^{2}\Leftrightarrow \sum_{j\in S}
m_{j}^{2} \preceq\sum_{j\in N}m_{j}^{2}
\end{equation*}%

Then, the SOC-rule satisfies CCA property. 
\end{proof}

\noindent We complete our study of the interval SOC-rule by showing that the above IAE and TBA properties characterize it. 



\begin{thm}
\label{th.1.}  There exists a unique interval allocation rule on the family of interval inventory situations satisfying the properties of IAE and TBA. It is the interval SOC-rule.
\end{thm}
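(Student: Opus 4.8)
The plan is to prove this as a standard uniqueness-via-characterization result in two parts: first verify that the interval SOC-rule $\Gamma$ satisfies IAE and TBA, and then show any interval allocation rule $\Psi$ satisfying both properties must coincide with $\Gamma$.

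For the existence direction, IAE is immediate: if $m_j = [0,0]$ then $m_j^2 = [0,0]$, so $\Gamma_j(N,a,\{m_i\}) = \frac{2a[0,0]}{\sqrt{\sum_i m_i^2}} = [0,0]$. For TBA, I would substitute the closed-form $\Gamma_j(N,a,\{m_i\}) = \frac{2am_j^2}{\sqrt{\sum_{i\in N}m_i^2}}$ into the three occurrences appearing in the TBA identity. The left-hand side becomes $\sqrt{\sum_i(m_i^2+\hat m_i^2)}\cdot\frac{2a(m_j^2+\hat m_j^2)}{\sqrt{\sum_i(m_i^2+\hat m_i^2)}} = 2a(m_j^2+\hat m_j^2)$, and similarly each term on the right-hand side telescopes to $2am_j^2$ and $2a\hat m_j^2$ respectively, so the identity holds termwise. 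I would need to be slightly careful that all the interval operations (division, square root, the implicit subtractions) are well-defined, which is guaranteed by the standing assumption (\ref{cond.1.}) on the situations in question and by restricting to situations where $\sqrt{m_i^2+\hat m_i^2}$ again forms an admissible interval inventory situation.

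For the uniqueness direction, the key idea is that TBA lets me build up an arbitrary situation from "elementary" ones. Given a target situation $(N,a,\{m_i\}_{i\in N})$, I would write it as a transfer-combination of situations $(N,a,\{e^{(k)}_i\}_{i\in N})$ in which only agent $k$ is active (say $e^{(k)}_k = m_k$ and $e^{(k)}_i = [0,0]$ for $i\neq k$) — concretely, iterating TBA shows $\sqrt{\sum_{i}m_i^2}\cdot\Psi_j(N,a,\{m_i\}) = \sum_{k\in N}m_k\cdot\Psi_j(N,a,\{e^{(k)}_i\})$ (being careful about the order of combination and that each partial sum stays admissible). In an elementary situation $(N,a,\{e^{(k)}_i\})$ with $k\neq j$, agent $j$ is inactive, so IAE forces $\Psi_j(N,a,\{e^{(k)}_i\}) = [0,0]$; hence only the $k=j$ term survives, giving $\sqrt{\sum_i m_i^2}\cdot\Psi_j(N,a,\{m_i\}) = m_j\cdot\Psi_j(N,a,\{e^{(j)}_i\})$. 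It then remains to pin down $\Psi_j$ on the single elementary situation where only $j$ is active; there, efficiency of the interval allocation rule gives $\sum_{i}\Psi_i = 2a\sqrt{m_j^2} = 2am_j$, and IAE kills all terms except $i=j$, so $\Psi_j(N,a,\{e^{(j)}_i\}) = 2am_j$. Substituting back yields $\Psi_j(N,a,\{m_i\}) = \frac{2am_j^2}{\sqrt{\sum_i m_i^2}} = \Gamma_j(N,a,\{m_i\})$.

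The main obstacle is bookkeeping with the interval arithmetic rather than the combinatorial skeleton, which mirrors the classical Meca (2004) transfer/null-player characterization. Specifically, I must ensure that at every stage of the inductive TBA decomposition the intermediate situation $\{\sqrt{\,\cdot\,}\}$ genuinely lies in $\mathcal{IS}^N$ — i.e., that condition (\ref{cond.1.}) and the well-definedness constraints for interval subtraction and division are preserved under taking $\sqrt{m^2+\hat m^2}$ componentwise and under passing to the elementary summands. I would either impose this as part of the domain (consistent with the paper's convention of calling the admissible situations "simply interval inventory situations") or verify it directly, noting that the lower/upper border games $(N,\underline{w})$ and $(N,\overline{w})$ are each honest deterministic inventory situations so the classical argument applies to each endpoint separately, and then recombine.
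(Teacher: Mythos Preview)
Your proposal is correct and follows essentially the same approach as the paper: verify IAE and TBA by direct substitution (both sides of TBA collapse to $2a(m_j^2+\hat m_j^2)$), then for uniqueness decompose an arbitrary situation via iterated TBA into the elementary situations $(N,a,\{e^{(k)}_i\})$ with a single active agent, use IAE to kill the $k\neq j$ terms, and use efficiency on the surviving elementary situation to obtain $\Psi_j(N,a,\{e^{(j)}_i\})=2am_j$. Your explicit concern about admissibility of the intermediate situations under condition~(\ref{cond.1.}) is in fact more careful than the paper's own treatment, which applies the iterated TBA without commenting on this point.
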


\begin{proof}
First, we prove that the interval SOC-rule satisfies the properties of IAE and TBA.




Indeed, take $i\in N$ such that $m_{i}=\left[ 0,0\right]$, then 
\begin{equation*}
\Gamma_i (N,a,\left\{ m_{j}\right\} _{j\in
N})=\frac{2am_{i}^{2}}{\sqrt{\sum_{j\in N}m_{j}^{2}}}=\left[ 0,0\right],
\end{equation*}

hence $\Gamma$ satisfies IAE. 

Take now $(N,a,\left\{ m_{l}\right\}_{l\in N})$ and $(N,a,\left\{ \hat{m}_{l}\right\} _{l\in N})$ and $i\in N$. Firstly, we know that 
\begin{equation*}
\begin{split}
\sqrt{\sum_{j\in N}(m_j^2+\hat{m}_j^2})\cdot \Gamma_i (N,a,\left\{ \sqrt{m^2_{j}+\hat{m}^2_{j}}\right\} _{j\in
N})=\\
\sqrt{\sum_{j\in N}(m_j^2+\hat{m}_j^2})\cdot\frac{2a\left(\sqrt{m^2_{i}+\hat{m}^2_{i}}\right)^{2}}{\sqrt{\sum_{j\in N}\left(\sqrt{m^2_{j}+\hat{m}^2_{j}}\right)^{2}}}=\\
2a\left(m^2_{i}+\hat{m}^2_{i}\right).\\
\end{split}
\end{equation*}%
Secondly, we have that  
\begin{equation*}
\begin{split}
\sqrt{\sum_{j\in N}m_j^2}\cdot \Gamma_i
(N,a,\left\{ m_{j}\right\} _{j\in
N})+\sqrt{\sum_{j\in N}\hat{m}_j^2}\cdot \Gamma_i (N,a,\left\{ \hat{m}_{j}\right\} _{j\in
N})=\\
\sqrt{\sum_{j\in N}m_j^2}\cdot \frac{2am_i^2}{\sqrt{\sum_{j\in N}m_{j}^{2}}}+\sqrt{\sum_{j\in N}\hat{m}_j^2}\cdot \frac{2a\hat{m}_i^2}{\sqrt{\sum_{j\in N}\hat{m}_{j}^{2}}}=
2a\left(m^2_{i}+\hat{m}^2_{i}\right).
\end{split}
\end{equation*}%

We can then conclude that $\Gamma$ also satisfies TBA property. Now, we show that it is the unique rule that satisfies those two properties. 

Consider $\Psi $ an interval allocation rule that satisfies IAE and TBA. Take an interval inventory situation $(N,a,\left\{ m_{j}\right\} _{j\in
N})$. For each $i\in N$, we can define $(N,a,\left\{ m^i_j\right\} _{j\in
N})$ such that $m^i_i=m_i$ and $m^i_j=\left[0,0\right]$ for all $j\in N\backslash\{i\}$. By the TBA we have for all $i\in N$ 

\begin{equation*}
\begin{split}
    \sqrt{\sum_{j\in N}\sum_{l\in N}(m^j_l)^2}\cdot \Psi_i \left(N,a,\left\{ \sqrt{\sum_{j\in N}(m^j_l)^2}\right\} _{l\in
N}\right)=
\sum_{j\in N}\sqrt{\sum_{l\in N}(m^j_l)^2}\cdot\Psi_i \left(N,a,\left\{ m^j_l\right\} _{l\in
N}\right)\Leftrightarrow\\
\sqrt{\sum_{l\in N}m_l^2}\cdot \Psi_i \left(N,a,\left\{ m_l\right\} _{l\in
N}\right)=\sum_{j\in N}m_j\cdot\Psi_i \left(N,a,\left\{ m^j_l\right\} _{l\in
N}\right).
\end{split}
\end{equation*}
\noindent Now, taking into account that $\Psi $ is  an interval allocation rule and satisfies IAE, we have that 
\begin{equation*}
\Psi_i \left(N,a,\left\{ m^j_l\right\} _{l\in
N}\right)=%
\begin{cases}
2am_i & \text{if }i=j, \\ 
\lbrack 0,0] & \text{if }i\neq j,%
\end{cases}%
\end{equation*}%
and, then 
\begin{equation*}
\sqrt{\sum_{l\in N}m_l^2}\cdot \Psi_i \left(N,a,\left\{ m_l\right\} _{l\in
N}\right)=2am_i^2
\end{equation*}%
so $\Psi _{i}\left(N,a,\left\{ m_l\right\} _{l\in
N}\right)=\Gamma _{i}\left(N,a,\left\{ m_l\right\} _{l\in
N}\right)$. 
\end{proof}
\medskip

Next, we show the independence of the IAE and TBA properties in Theorem \ref{th.1.}.

 \begin{enumerate}
     \item \textbf{NO TBA.} The following interval allocation rule satisfies IAE but not TBA
     \begin{equation*}
\Psi_j \left(N,a,\left\{m_i\right\} _{i\in
N}\right)=%
\begin{cases}
[0,0] & \text{if }  j\in IA\left(N,a,\left\{m_i\right\} _{i\in
N}\right) \\ 
\frac{2a\sqrt{%
\sum_{i\in N}m_{i}^{2}}}{|N|-|IA\left(N,a,\left\{m_i\right\} _{i\in
N}\right)|}& \text{otherwise. }
\end{cases}%
\end{equation*}%
for all $j\in N$, where $IA\left(N,a,\left\{m_i\right\} _{i\in
N}\right)=\{j\in N\;|\; m_j=[0,0]\}$.
     \item \textbf{NO IAE.} The following interval allocation rule satisfies TBA but not IAE 
\begin{equation*}
\Psi \left(N,a,\left\{m_i\right\} _{i\in
N}\right)=%
\begin{cases}
(\Gamma_{2}(N,a,\left\{ m_{i}\right\}_{i\in
N}),\Gamma_{1}(N,a,\left\{ m_{i}\right\}_{i\in
N})) & \text{if } \mid N\mid =2 \\ 
\Gamma (N,a,\left\{ m_{i}\right\}_{i\in
N}) & \text{if } \mid N \mid \neq 2.%
\end{cases}%
\end{equation*}%
 \end{enumerate}

In the following, we focus on examining the interval Shapley rule, as introduced in \cite{Alparslan2010Axiomatization}, within the framework of interval inventory situations. Our study aims to explore its properties and implications in this context. Furthermore, we propose a characterization of the interval Shapley rule within the broader family of interval inventory situations, providing a deeper understanding of its role and applicability. 

\section{The interval Shapley rule}\label{sec:4}

\noindent Consider an interval inventory situation $\left( N,a,\left\{
m_{i}\right\} _{i\in N}\right)$. We define the \textit{interval Shapley rule} for this interval inventory situation as the Shapley value of the associated interval inventory game $\left( N,w^I\right).$ Formally  

\begin{equation}  \label{shap}
Sh\left( N,a,\left\{ m_{i}\right\} _{i\in N}\right) :=\varphi(N,w^I)
\end{equation}%
where $\varphi $ corresponds to interval Shapley value as described in \ref{CIG}. We know the interval Shapley value is $\varphi_j(N,w^I)=\left[ \phi_j(N,\underline{w^I}),\phi_j
(N,\overline{w^I})\right] $ for all $j \in N$, and $\phi_j$ is the Shapley value for
the inventory games $(N,\underline{w^I})$ and $(N,\overline{w^I}).$ 
\smallskip

\noindent The interval Shapley rule is well-defined for interval inventory situations $(N,a,\left\{ m_{i}\right\} _{i\in N})$ satisfying $|m_{S}|\geq |m_{T}|$, for all $S\subseteq T\subseteq N\setminus\{i\}$ and  $i\in N$, with $m_S=\sqrt{\sum_{i\in S}m_i^2}$. Moreover, it is an interval allocation rule
because of $\sum_{i\in N} Sh_{j}(N,a,\left\{ m_{i}\right\} _{i\in N})=2a\sqrt{\sum_{i\in N}m_{i}^{2}}$. We focus on those situations which we call \textit{ monotonic interval inventory situations} and we aim to characterize the Shapley rule. 


Before doing so, we should note that interval Shapley rule on monotonic interval inventory situations always satisfies the Cross-Coalition Acceptable property. Indeed, since $(N,w^{I})$ is a concave interval cost game then 

$$\sum_{j\in S}Sh_{j}\left(N,a,\left\{ m_{i}\right\} _{i\in
N}\right) = \sum_{j\in S}\varphi _{j}(N,w^{I}%
)\preceq w^{I}(S)=2a\sqrt{%
\sum_{i\in S}m_{i}^{2}}.$$

The following example illustrates the behavior of the interval Shapley rule in monotonic interval inventory situations.

\begin{ex}
Let $N=\{1,2,3\}$, $a=1$, $m_{1}=[1,3]$, $m_{2}=[2,4]$, $m_{3}=[3,5]$. It is easy to check that this is a monotonic interval inventory situation and the total cost to allocate is $[7.48,14.14].$  Then, the interval Shapley rule is given by 


\begin{equation*}
Sh\left(N,a,\left\{ m_{i}\right\} _{i\in N}\right)=\left(
[0.89,3.06],[2.33,4.63],[4.26,6.46]\right) .
\end{equation*}
\end{ex}

To complete the analysis of the interval Shapley rule in monotonic interval inventory situations, we introduce a new desirable property for interval allocation rules. It states that in the context of an interval inventory problem, the leaving of one of two agents results in a change to both the minimum and maximum cost for the remaining agent that is identical to the change that would occur were the roles reversed. It is called \textit{Balanced Cost}.


\textbf{Balanced Cost (BC).} We say that an interval allocation rule $\Psi$ satisfies balanced cost if, for any $(N,a,\{m_{i}\}_{i \in N})$ and for any $i,j \in N$
\begin{equation*}
\underline{\Psi_{i}(N,a,\left\{ m_{l}\right\} _{l\in N})} -\underline{%
\Psi_{i}(N\backslash\{j\},a,\left\{ m_{il}\right\} _{l\in N\backslash\{j\}})}
=\underline{\Psi_{j}(N,a,\left\{ m_{l}\right\} _{il\in N})} -\underline{%
\Psi_{j}(N\backslash\{i\},a,\left\{ m_{il}\right\} _{l\in N\backslash\{i\}})}
\end{equation*}%
and 
\begin{equation*}
\overline{\Psi_{i}(N,a,\left\{ m_{l}\right\} _{il\in N})} -\overline{%
\Psi_{i}(N\backslash\{j\},a,\left\{ m_{l}\right\} _{il\in N\backslash\{j\}})}
=\overline{\Psi_{j}(N,a,\left\{ m_{l}\right\} _{il\in N})} -\overline{%
\Psi_{j}(N\backslash\{i\},a,\left\{ m_{l}\right\} _{l\in N\backslash\{i\}})}.
\end{equation*}

The following theorem demonstrates that the interval Shapley rule is the only interval allocation rule that satisfies the BC property for monotonic interval inventory situations.

\begin{thm}
\label{th.2.} The Shapley rule is the unique rule for monotonic interval inventory
situations that satisfies BC property.
\end{thm}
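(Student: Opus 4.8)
The statement is the interval analogue of Myerson's characterization of the Shapley value by the balanced contributions property, so the plan is to reduce everything to the classical result, applied separately on the two border games. Two preliminary observations do most of the work. First, for a monotonic interval inventory situation $(N,a,\{m_i\}_{i\in N})$ the interval Shapley rule satisfies $Sh_j(N,a,\{m_i\}_{i\in N})=\varphi_j(N,w^I)=[\phi_j(N,\underline{w^I}),\phi_j(N,\overline{w^I})]$, and the border games $(N,\underline{w^I})$ and $(N,\overline{w^I})$ are themselves classical inventory games in the sense of \cite{Meca2003}, with ordering cost $a$ and optimal order numbers $\underline{m_i}$ and $\overline{m_i}$ respectively; hence their Shapley values are well defined. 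Second, since $w^I(S)=2a\sqrt{\sum_{i\in S}m_i^2}$ depends only on $S$, the interval inventory game built from the restriction of the situation to $N\setminus\{j\}$ agrees with $w^I$ on $2^{N\setminus\{j\}}$, so $Sh_i(N\setminus\{j\},a,\{m_l\}_{l\in N\setminus\{j\}})=[\phi_i(N\setminus\{j\},\underline{w^I}),\phi_i(N\setminus\{j\},\overline{w^I})]$; and a subsituation of a monotonic interval inventory situation is again monotonic (the defining inequalities for the subset are a subfamily of those for $N$), so every object below is well defined. Throughout, ``rule'' means interval allocation rule, so efficiency $\sum_{i\in N}\Psi_i(N,a,\{m_i\}_{i\in N})=w^I(N)$ is assumed, which on endpoints reads $\sum_i\underline{\Psi_i}=\underline{w^I}(N)$ and $\sum_i\overline{\Psi_i}=\overline{w^I}(N)$.

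\textbf{Step 1 (the interval Shapley rule satisfies BC).} Fix $i,j\in N$ with $i\ne j$. By the remarks above, the lower-bound equation of BC becomes $\phi_i(N,\underline{w^I})-\phi_i(N\setminus\{j\},\underline{w^I})=\phi_j(N,\underline{w^I})-\phi_j(N\setminus\{i\},\underline{w^I})$, which is precisely the balanced contributions identity for the Shapley value of the TU cost game $(N,\underline{w^I})$, valid for every TU game. The upper-bound equation of BC is the same identity for $(N,\overline{w^I})$. Hence $Sh$ satisfies BC (it is an interval allocation rule, as already noted in the paper).

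\textbf{Step 2 (uniqueness).} Let $\Psi$ be any interval allocation rule on monotonic interval inventory situations satisfying BC; I prove $\Psi=Sh$ by induction on $|N|$. For $|N|=1$, efficiency forces $\Psi_1(\{1\},a,m_1)=w^I(\{1\})=2a\sqrt{m_1^2}=2am_1=Sh_1(\{1\},a,m_1)$. For $|N|\ge 2$, assume the claim for all strictly smaller player sets (note $\Psi$ restricted to any subsituation is again an interval allocation rule satisfying BC there, so the hypothesis applies). Fix $j\in N$; for each $i\in N\setminus\{j\}$, the lower-bound BC equation gives $\underline{\Psi_i(N,\cdot)}-\underline{\Psi_j(N,\cdot)}=\underline{\Psi_i(N\setminus\{j\},\cdot)}-\underline{\Psi_j(N\setminus\{i\},\cdot)}=\underline{Sh_i(N\setminus\{j\},\cdot)}-\underline{Sh_j(N\setminus\{i\},\cdot)}=:c_i$, a number independent of $\Psi$ by the induction hypothesis. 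Substituting $\underline{\Psi_i(N,\cdot)}=\underline{\Psi_j(N,\cdot)}+c_i$ into efficiency yields $|N|\,\underline{\Psi_j(N,\cdot)}+\sum_{i\ne j}c_i=\underline{w^I}(N)$, which determines $\underline{\Psi_j(N,\cdot)}$ and hence all $\underline{\Psi_i(N,\cdot)}$ uniquely. Since $Sh$ satisfies BC (Step 1) and efficiency, its lower bounds solve the same linear system, so $\underline{\Psi_i(N,\cdot)}=\underline{Sh_i(N,\cdot)}$ for all $i$. The identical argument on upper bounds gives $\overline{\Psi_i(N,\cdot)}=\overline{Sh_i(N,\cdot)}$, hence $\Psi_i(N,\cdot)=Sh_i(N,\cdot)$ for all $i\in N$, closing the induction.

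\textbf{Main obstacle.} Conceptually this is the classical balanced-contributions proof, so the real care lies in the bookkeeping: confirming that restricting a monotonic interval inventory situation to a subset of agents again gives a monotonic interval inventory situation (so that $Sh$, and the induction, are legitimate there), and that the interval inventory game and its two border games restrict consistently, so that the endpoint-wise reading of BC is exactly the classical identity. One should also keep in mind that the BC equations involve ordinary real subtraction of the endpoints $\underline{\Psi_i(\cdot)},\overline{\Psi_i(\cdot)}$, not interval subtraction, so the well-definedness restrictions attached to interval arithmetic elsewhere in the paper play no role here.
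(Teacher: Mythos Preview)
Your proof is correct and follows essentially the same approach as the paper's: you reduce the BC property to Myerson's balanced contributions identity applied to each border game, and you prove uniqueness by induction on $|N|$ using BC together with efficiency. The only cosmetic difference is in the inductive step: the paper writes the BC equations for both the candidate rule and $Sh$, subtracts to obtain $\underline{\Psi_i}-\underline{\Psi_j}=\underline{Sh_i}-\underline{Sh_j}$, and then sums over $i\in N$; you instead solve the linear system for $\underline{\Psi_j}$ directly and invoke that $Sh$ satisfies the same system, which amounts to the same computation.
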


\begin{proof}
We begin by proving that Shapley rule satisfies Balanced Cost property. Consider $\left( N,a,\left\{ m_{l}\right\} _{l\in N}\right)$. Take $i,j\in N$ with $i\neq j$. Let us prove the first condition of the property; the second one is proved in a similar way. Indeed, 
\begin{eqnarray*}
\underline{Sh_{i}\left( N,a,\left\{ m_{l}\right\} _{l\in N}\right)} -\underline{Sh_{i}\left( N\backslash\{j\},a,\left\{ m_{l}\right\} _{l\in N\backslash\{j\}}\right)} &=&\underline{\varphi
_{i}(N,w^I)}-\underline{\varphi _{i}(N\backslash\{j\} ,w^{I^{-j}})} 
\\
\underline{Sh_{j}\left( N,a,\left\{ m_{l}\right\} _{l\in N}\right)} -\underline{Sh_{j}\left( N\backslash\{i\},a,\left\{ m_{l}\right\} _{l\in N\backslash\{i\}}\right)} &=&\underline{\varphi
_{j}(N,w^{I})}-\underline{\varphi _{j}(N\backslash\{i\} ,w^{I^{-i}})} 
\end{eqnarray*}
where $I^{-j}=\left( N\backslash\{j\},a,\left\{ m_{l}\right\} _{l\in N\backslash\{i\}}\right)$ for all $j \in N$. Notice that $\underline{w^{I^{-j}}}$ is a cooperative interval game with set
of players $N\setminus \left\{ j\right\}$ such that, for all $S\subseteq
N\backslash\{j\}, $
\begin{equation*}
\underline{w^{I^{-j}}}\left( S\right) =2a\sqrt{\sum\nolimits_{i\in S}m_{i}^{2}}%
=\underline{w^{I}}\left( S\right).
\end{equation*}%

We know that 
\begin{eqnarray*}
\varphi _{i}(N,w^{I})=
\left[ \phi_{i} (N,\underline{w^{I}}),\phi_{i} (N,\overline{w^{I}})\right]
\end{eqnarray*}%
then
\begin{eqnarray*}
\underline{\varphi _{i}(N,w^{I})}-\underline{\varphi _{i}(N\backslash{j},w^{I^{-j}})} = 
 \phi_{i} (N,\underline{w^{I}})
- \phi_{i} (N\backslash\{j\} ,\underline{w^{I^{-j}}})
\end{eqnarray*}%
and
\begin{eqnarray*}
\underline{\varphi _{j}(N,w^{I})}-\underline{\varphi _{j}(N\backslash{i},w^{I^{-i}})} = \phi_{j} (N,\underline{w^{I}})
- \phi_{j} (N\backslash\{i\} ,\underline{w^{I^{-i}}})
\end{eqnarray*}%


By the other way, for all $S\subseteq N\backslash\{j\}$, $%
\underline{w^{I}}\left( S\right) =\underline{w_{-j}^{I}}\left( S\right) $ where $%
w_{-j}^{I}\left( S\right) $ denotes restriction of the cooperative
interval game $\underline{w}^{I}$ to $N\setminus \left\{ j\right\} .$ Then,%
\begin{equation}
\label{eq.1.}
\underline{w}^{I^{-j}}\left( S\right) =\underline{w_{-j}}^{I}\left( S\right)
\end{equation}%

Given that \cite {Myerson1980} established that the Shapley value satisfies the Balanced Contribution property\footnote{This property states that for each TU-game $(N,v)$ and each $i,j\in N$ with $i\neq j$ then $\phi_i(N,v)-\phi_i(N\backslash\{j\},v_{-j})=\phi_j(N,v)-\phi_j(N\backslash\{i\},v_{-i})$.}, it follows that

\begin{eqnarray*}
\underline{Sh_{i}\left( N,a,\left\{ m_{l}\right\} _{l\in N}\right)} -\underline{Sh_{i}\left( N\backslash\{j\},a,\left\{ m_{l}\right\} _{l\in N\backslash\{j\}}\right)} =&\\
\underline{\varphi
_{i}(N,w^I)}-\underline{\varphi _{i}(N\backslash\{j\} ,w^{I^{-j}})} =&
\\
 \phi_{i} (N,\underline{w^{I}})
- \phi_{i} (N\backslash\{j\} ,\underline{w^{I^{-j}}}) =&\\
 \phi_{i} (N,\underline{w^{I}})
- \phi_{i} (N\backslash\{j\} ,\underline{w^{I}_{-j}})=& \\
 \phi_{j} (N,\underline{w^{I}})
- \phi_{j} (N\backslash\{i\} ,\underline{w^{I}_{-i}}) =&\\
 \underline{\varphi
_{j}(N,w^{I})}-\underline{\varphi _{j}(N\backslash\{i\} ,w^{I^{-i}})} =&\\
 \underline{Sh_{j}\left( N,a,\left\{ m_{l}\right\} _{l\in N}\right)} -\underline{Sh_{j}\left( N\backslash\{i\},a,\left\{ m_{l}\right\} _{l\in N\backslash\{i\}}\right)}.  
\end{eqnarray*}

\noindent Subsequently, we demonstrate that the interval Shapley rule is the unique interval allocation rule that satisfies BC property. Assume the existence of an alternative interval allocation rule for interval inventory situations that satisfies the BC property, denoted $R\left( N,a,\left\{ m_{l}\right\} _{l\in N}\right)$. We establish, via induction on the number of agents, that this rule coincides with the interval Shapley rule.

\noindent  If $\left\vert N\right\vert =1$ by definition of allocation rule
\begin{equation*}
R\left( \left\{ i\right\},a, m_{i}\right) =2am_{i}^{2}=Sh\left( \left\{ i\right\},a, m_{i}\right).
\end{equation*}%
If $\left\vert N\right\vert \geq 2.$ Assume that $\left( N,a,\left\{ m_{l}\right\} _{l\in N}\right)$ is minimal in the sense
that 
$$R\left( N\backslash\{j\},a,\left\{ m_{l}\right\} _{l\in N\backslash\{j\}}\right) =Sh\left( N\backslash\{j\},a,\left\{ m_{l}\right\} _{l\in N\backslash\{j\}}\right) $$
for any $j\in N$. Consider now $i,j\in N$ with $i\neq j$, then as $R\left( N,a,\left\{ m_{l}\right\} _{l\in N}\right) $ and $Sh\left( N,a,\left\{ m_{l}\right\} _{l\in N}\right) $ satisfies BC property, we have that
\begin{eqnarray*}
\overline{R_{i}(N,a,\left\{ m_{i}\right\} _{i\in
N})} -\overline{R_{i}(N\backslash\{j\},a,\left\{ m_{i}\right\} _{i\in
N\backslash\{j\}})}& = \\
\overline{R_{j}(N,a,\left\{ m_{i}\right\} _{i\in
N})}
-\overline{R_{j}(N\backslash\{i\},a,\left\{ m_{i}\right\} _{i\in
N\backslash\{i\}})},&\\
\overline{Sh_{i}(N,a,\left\{ m_{i}\right\} _{i\in
N})} -\overline{Sh_{i}(N\backslash\{j\},a,\left\{ m_{i}\right\} _{i\in
N\backslash\{j\}})} &=\\
\overline{Sh_{j}(N,a,\left\{ m_{i}\right\} _{i\in
N})}
-\overline{Sh_{j}(N\backslash\{i\},a,\left\{ m_{i}\right\} _{i\in
N\backslash\{i\}})},
\end{eqnarray*}%
and
\begin{eqnarray*}
\underline{R_{i}(N,a,\left\{ m_{i}\right\} _{i\in
N})} -\underline{R_{i}(N\backslash\{j\},a,\left\{ m_{i}\right\} _{i\in
N\backslash\{j\}})} &=\\
\underline{R_{j}(N,a,\left\{ m_{i}\right\} _{i\in
N})}
-\underline{R_{j}(N\backslash\{i\},a,\left\{ m_{i}\right\} _{i\in
N\backslash\{i\}})},&\\
\underline{Sh_{i}(N,a,\left\{ m_{i}\right\} _{i\in
N})} -\underline{Sh_{i}(N\backslash\{j\},a,\left\{ m_{i}\right\} _{i\in
N\backslash\{j\}})} &=\\
\underline{Sh_{j}(N,a,\left\{ m_{i}\right\} _{i\in
N})}
-\underline{Sh_{j}(N\backslash\{i\},a,\left\{ m_{i}\right\} _{i\in
N\backslash\{i\}})}
\end{eqnarray*}%
and by the minimality of $\left( N\backslash\{j\},a,\left\{ m_{l}\right\} _{l\in N\backslash\{j\}}\right)$;
\begin{eqnarray*}
\overline{R_{i}(N,a,\left\{ m_{i}\right\} _{i\in
N})} -
\overline{R_{j}(N,a,\left\{ m_{i}\right\} _{i\in
N})} &=\\
\overline{Sh_{i}(N,a,\left\{ m_{i}\right\} _{i\in
N})} -
\overline{Sh_{j}(N,a,\left\{ m_{i}\right\} _{i\in
N})},
\end{eqnarray*}%
and
\begin{eqnarray*}
\underline{R_{i}(N,a,\left\{ m_{i}\right\} _{i\in
N})} -
\underline{R_{j}(N,a,\left\{ m_{i}\right\} _{i\in
N})} &=\\
\underline{Sh_{i}(N,a,\left\{ m_{i}\right\} _{i\in
N})} -
\underline{Sh_{j}(N,a,\left\{ m_{i}\right\} _{i\in
N})}.&
\end{eqnarray*}%

Therefore, summing over $i\in N$
\begin{eqnarray*}
\sum_{i\in N}\left(\overline{R_{i}(N,a,\left\{ m_{i}\right\} _{i\in
N})} - \overline{Sh_{i}(N,a,\left\{ m_{i}\right\} _{i\in
N})}\right)
 &=\\
 |N|\left(\overline{R_{j}(N,a,\left\{ m_{i}\right\} _{i\in
N})} -
\overline{Sh_{j}(N,a,\left\{ m_{i}\right\} _{i\in
N})}\right),
\end{eqnarray*}%
and
\begin{eqnarray*}
\sum_{i\in N}\left(\underline{R_{i}(N,a,\left\{ m_{i}\right\} _{i\in
N})} - \underline{Sh_{i}(N,a,\left\{ m_{i}\right\} _{i\in
N})}\right)
 &=\\
 |N|\left(\underline{R_{j}(N,a,\left\{ m_{i}\right\} _{i\in
N})} -
\underline{Sh_{j}(N,a,\left\{ m_{i}\right\} _{i\in
N})}\right).&
\end{eqnarray*}%
Now, by definition of allocation rule, we have that
\begin{eqnarray*}
0
 &=
 |N|\left(\overline{R_{j}(N,a,\left\{ m_{i}\right\} _{i\in
N})} -
\overline{Sh_{j}(N,a,\left\{ m_{i}\right\} _{i\in
N})}\right),&\\
0
 &=
 |N|\left(\underline{R_{j}(N,a,\left\{ m_{i}\right\} _{i\in
N})} -
\underline{Sh_{j}(N,a,\left\{ m_{i}\right\} _{i\in
N})}\right).&
\end{eqnarray*}%
and then 
\begin{equation*}
R_{j}(N,a,\left\{ m_{i}\right\} _{i\in
N}) =Sh_{j}(N,a,\left\{ m_{i}\right\} _{i\in
N})
\end{equation*}
for all $j\in N$.
\end{proof}

We observe that the interval SOC-rule is an interval allocation rule that fails to satisfy the Balanced Contribution (BC) property. 

We complete our study of cost allocations in interval inventory situation by developing a comparative analysis of the the interval individual cost, the interval Shapley rule, the interval SOC-rule  through a case study, emphasizing their differences and exploring potential connections.

\section{Perfume Inventory Situation for Duty-Free Stores at Major Spanish Airports}\label{sec:5}

This study focuses on the main Spanish airports with the objective of optimizing the joint management of perfume inventories in duty-free retail outlets located at these sites. Due to the inherent uncertainty in perfume demand, although within known bounds, the problem is appropriately modeled as an interval inventory situation. This modeling approach captures demand variability while facilitating robust, coordinated inventory decisions across multiple locations and enabling a fair allocation of the total cost resulting from cooperative management.

The analysis specifically considers the airports of Madrid-Barajas, Barcelona-El Prat, Málaga-Costa del Sol, Valencia, Alicante, Palma de Mallorca, and Sevilla. Using an interval-based EOQ model, the study provides inventory optimization under uncertainty, taking into account fluctuations in demand. Demand intervals are derived from AENA 2023 passenger traffic statistics (Madrid: 60200000,
Barcelona: 49900000, Malaga: 22300000, Valencia: 10500000, Alicante:
15700000, Mallorca: 31200000, Sevilla: 8100000; source: \texttt{www.aena.es}~%
\cite{aena2023}). 

The product considered is perfume in 50 ml bottles, priced
at 50 euro per unit. Perfume demand assumes a 5\% annual purchase rate by
passengers, supported by a 12\% duty-free sales growth~\cite{Reuters2024}.
Demand intervals $d_{i}=[\underline{d}_{i},\overline{d}_{i}]$ reflect a $\pm
30\%$ seasonal variation. 

The interval inventory situation is defined for $N=\{1,2,3,4,5,6,7\}$ with Madrid(1), Barcelona(2), Mallorca(3), Malaga(4), Alicante(5), Valencia(6), Sevilla(7), with monthly demand intervals and holding costs as
follows

\begin{itemize}
\item Madrid-Barajas: $d_1 = [175 000, 325 000]$, $h_1 = 10$ euro/unit/year,

\item Barcelona-El Prat: $d_2 = [145 600, 270 400]$, $h_2 = 12$
euro/unit/year,

\item Mallorca: $d_3 = [90 900, 168 900]$, $h_3 = 10$ euro/unit/year,

\item Malaga-Costa del Sol: $d_4 = [64 400, 119 600]$, $h_4 = 8$
euro/unit/year,

\item Alicante: $d_5 = [45 700, 84 900]$, $h_5 = 11$ euro/unit/year,

\item Valencia: $d_6 = [30 600, 56 700]$, $h_6 = 9$ euro/unit/year,

\item Sevilla: $d_7 = [23 600, 43 800]$, $h_7 = 7$ euro/unit/year.
\end{itemize}

 The ordering cost is fixed at $a=200$ euro per order in all stores. Therefore, it is easy to see that the optimal number of orders per unit of time is given by
\begin{itemize}
\item Madrid-Barajas: $m_1 = [66.14, 90.13]$,

\item Barcelona-El Prat: $m_2 = [66.09, 90.06]$,

\item Mallorca: $m_3 = [47.67, 64.99]$,

\item Malaga-Costa del Sol: $m_4 = [35.89, 48.90]$,

\item Alicante: $m_5 = [35.45, 48.32]$,

\item Valencia: $m_6 = [26.23, 35.72]$,

\item Sevilla: $m_7 = [20.32, 27.69]$,
\end{itemize}
and the total cost is equal to $[48434.29,66004.24]$.  It is easy to check that this an monotonic interval inventory situation that satisfies condition (\ref{cond.1.}). 

Table \ref{tabla.1.} below compares the SOC-rule with individual costs. It shows that the former significantly reduces the latter for all airports. The SOC-rule is relatively easy to calculate and reflects the effect of demand on cost sharing. Madrid and Barcelona, with higher demand intervals, receive larger cost shares under the SOC-rule, while smaller airports such as Seville receive smaller shares. Columns three and four report the length of the interval for individual costs ($L_{IC}$) and for the SOC-rule allocation ($L_{SOC}$), respectively. Since the SOC-rule yields shorter intervals, it results in a more precise allocation compared to individual cost intervals.

 \begin{table}[h]
    \centering
    \begin{tabular}{c|c|c|c|c|}
        \textbf{Airports} &\textbf{Individual Costs} &\textbf{SOC-rule} & \textbf{$L_{IC}$} & \textbf{$L_{SOC}$} \\ \hline
         Madrid-Barajas &  [26457.51, 36055.51] & [10757.41, 14660.65] & 9598.00 & 3903.24 \\
        Barcelona-El Prat &  [26436.34, 36026.66] & [10748.80, 14648.92]  & 9590.32&  3900.12 \\
         Mallorca &  [19068.30, 25992.31]  & [7753.01, 10568.82] & 6924.01 & 2815.81 \\
         Malaga-Costa del Sol &  [14355.49, 19563.23] & [5836.83, 7954.67]  & 5207.74 & 2117.84 \\
             Alicante &  [14180.27, 19327.70]  & [5765.58, 7858.90] & 5147.43 & 2093.32  \\
         Valencia &  [10495.71, 14287.06] & [4267.47, 5809.31] & 3791.35 &  1541.84 \\
         Sevilla &  [8128.96, 11074.29]  & [3305.17, 4502.96] & 2945.33 &  1197.79 \\
    \end{tabular}
    \caption{Individual Costs v.s. SOC-rule.}
    \label{tabla.1.}
\end{table}

Table \ref{tabla.2.} below compares the interval Shapley rule with individual costs. The former also significantly
reduces the latter for each airport. Although the interval Shapley rule is much more difficult to compute than the SOC-rule, there are now sampling techniques that allow us to obtain the former for situations with a large number of agents. Again, Madrid and Barcelona, with higher demand intervals, receive larger cost shares under the interval Shapley rule, while smaller airports such as Seville receive smaller shares, reflecting demand-driven allocation. Columns three and four now present the interval lengths for individual costs ($L_{IC}$)
 and for the Shapley rule ($L_{Sh}$), respectively. As in previous cases, the Shapley rule produces shorter intervals, thereby providing a more accurate allocation than that derived from individual cost intervals.

\begin{table}[h]

    \centering
    \begin{tabular}{c|c|c|c|c|}
        \textbf{Airports} &\textbf{Individual Costs} &\textbf{Shapley rule}  & \textbf{$L_{IC}$} & \textbf{$L_{Sh}$} \\ \hline
         Madrid-Barajas &  [26457.51, 36055.51] & [12990.28, 17703.19] & 9598.00& 4712.91\\
         Barcelona-El Prat &  [26436.34, 36026.66]  & [12973.37, 17680.14]  & 9590.32 &  4706.77\\
          Mallorca &  [19068.30, 25992.31]  & [7661.97, 10446.05] & 6924.01 & 2784.08  \\
          Malaga-Costa del Sol &  [14355.49, 19563.23]  & [4896.94, 6673.63] & 5207.74& 1776.69 \\
             Alicante &  [14180.27, 19327.70] & [4804.24, 6549.01] & 5147.43 & 1744.77 \\
          Valencia &  [10495.71, 14287.06] & [3031.00, 4123.68]  & 3791.35 & 1092.68 \\
        Sevilla &  [8128.96, 11074.29]  & [2076.49, 2828.54] & 2945.33 &  752.05  \\
    \end{tabular}
    \caption{Individual Costs v.s. Shapley rule.}
    \label{tabla.2.}
\end{table}

Table \ref{tabla.3.} presents a comparison between the SOC-rule and the interval Shapley rule. We observe that the SOC-rule reduces the costs assigned to high-demand airports, such as Madrid and Barcelona, compared to the Shapley rule, while also increasing precision (i.e., $L_{SOC}<L_{Sh}$). In contrast, for airports with lower demand, such as Málaga, Alicante, Valencia, and Sevilla, the SOC-rule results in higher costs and lower precision ($L_{SOC}<L_{Sh}$). For airports with intermediate demand, such as Mallorca, both the allocated costs and their precision are similar under the two rules. The results show no significant quantitative differences between the two rules, which yield stable and closely aligned allocations. Thus, the choice may depend on the available computational resources and the preferred theoretical properties, whether it satisfies TBA and IAE or adheres to the BC criterion.

 \begin{table}[h]
    \centering
    \begin{tabular}{c|c|c|c|c|}
        \textbf{Airports} &\textbf{SOC-rule} &\textbf{Shapley rule}   & \textbf{$L_{SOC}$} & \textbf{$L_{Sh}$}\\ \hline
         Madrid-Barajas &   [10757.41, 14660.65] & [12990.28, 17703.19] & 3903.24 & 4712.91 \\
        Barcelona-El Prat  & [10748.80, 14648.92] & [12973.37, 17680.14] & 3900.12 & 4706.77 \\
         Mallorca  & [7753.01, 10568.82]  & [7661.97, 10446.05]   &  2815.81 &  2784.08 \\
         Malaga-Costa del Sol  & [5836.83, 7954.67]  & [4896.94, 6673.63]  & 2117.84 & 1776.69 \\
             Alicante  & [5765.58, 7858.90]  & [4804.24, 6549.01]  & 2093.32&  1744.77 \\
         Valencia  & [4267.47, 5809.31]  & [3031.00, 4123.68] & 1541.84 & 1092.68 \\
         Sevilla   & [3305.17, 4502.96]  & [2076.49, 2828.54] & 1197.79 & 752.05 \\
    \end{tabular}
    \caption{SOC-rule v.s. Shapley rule.}
    \label{tabla.3.}
\end{table}

This study case demonstrates the effectiveness of the interval inventory situations with the EOQ model in optimizing inventory management under demand uncertainty and allocating the cost of cooperation for duty-free stores at major Spanish airports.  Using real-world data, such as the AENA 2023 statistics and an estimated 5\% passenger
purchase rate, it provides a quantitative basis for decision making.
The consideration of a $\pm 30\%$ seasonal variability and a 12\% annual
sales growth highlights the model's adaptability to real-market conditions.

\section{Conclusions}\label{sec:6} 

\noindent This paper has examined the application of interval cooperative models to inventory management under uncertainty, addressing the limitations of classical approaches that rely on deterministic assumptions. By extending EOQ-based cooperative inventory games to an interval framework, we offer a more realistic and robust representation of demand variability, an increasingly relevant concern in today’s volatile operational environments.

\noindent Focusing on monotonic interval situations, we developed and analyzed two key cost allocation rules: the interval SOC-rule and the interval Shapley rule. Both were derived using an axiomatic approach, which guarantees their theoretical strength and uniqueness. The interval SOC-rule, which allocates costs proportionally, satisfies the axioms of Inactive Agent Exemption (IAE) and Transfer-Based Additivity (TBA). In contrast, the interval Shapley rule is uniquely characterized by the Balanced Cost (BC) property. Importantly, both rules satisfy the Cross-Coalitional Acceptability (CCA) criterion, ensuring stable and equitable allocations even under uncertainty.

\noindent The practical relevance of these methods was illustrated through a real-world case study involving the joint management of perfume inventories across seven major Spanish airports. Using 2023 AENA passenger traffic data, the analysis demonstrated that the proposed interval-based models lead to more equitable and context-sensitive cost allocations. The comparison between the SOC-rule and the interval Shapley rule revealed minimal quantitative differences, with both rules providing stable and closely aligned allocations. As such, the decision to adopt one rule over the other may depend primarily on the computational capabilities of the agents and the specific axiomatic properties prioritized in the application context.

\noindent Overall, this study contributes to the advancement of cooperative inventory planning under uncertainty by bridging theoretical developments in interval cooperative games with practical, real-world applications. The interval inventory framework introduced here offers a flexible and rigorous tool for designing fair and stable cost-sharing mechanisms in uncertain environments.

\noindent Future research may build on this foundation by exploring alternative representations of uncertainty, such as fuzzy sets or stochastic intervals, incorporating logistical constraints like transportation and storage, or applying these models to other sectors where joint inventory management is essential. Additional directions include extending the current framework to multi-product systems or dynamic demand settings, further enhancing its applicability in complex supply chain contexts.

\section*{Acknowledgments}
This work is part of the R+D+I project grants   PID2021-12403030NB-C31 and PID2022-137211NB-100, that are funded by MCIN/AEI/10.13039/501100011033/ and by ERDF ’’A way of making Europe’’/EU. This work was also partially supported by the grant CIPROM/2024/34 funded by the Valencian Community.

\section*{Statements and Declarations}

\textbf{Conflicts of interest} The authors state that there is no conflict of interest.

\bibliography{referencias}
\bibliographystyle{apalike}

\end{document}